\pgfplotsset{
  /pgfplots/confidence box/.style 2 args={
    legend image code/.code={
        \definecolor{steelblue31119180}{RGB}{31,119,180}
        \draw[steelblue31119180,no markers, fill=steelblue31119180, opacity=0.5]
        plot coordinates {
        (-0.1cm,-0.1cm)
        (-0.1cm,0.2cm)
        (0.5cm,0.2cm)
        (0.5cm,-0.1cm)
        (-0.1cm,-0.1cm)
      }
      node[rectangle]{};
    }
  }
}
\pgfplotsset{compat=newest}
\newtheorem{theorem}{Theorem}
\definecolor{darkgray176}{RGB}{176,176,176}
\definecolor{darkorange25512714}{RGB}{255,127,14}
\definecolor{forestgreen4416044}{RGB}{44,160,44}
\definecolor{steelblue31119180}{RGB}{31,119,180}
\DeclareAcronym{aic}{
    short = AIC,
    long = Akaike Information Criterion 
}
\DeclareAcronym{aicc}{
    short = AICc,
    long = Akaike Information Criterion correction 
}
\DeclareAcronym{bfgs}{
    short = BFGS,
    long = Broyden-Fletcher-Goldfarb-Shanno algorithm
}
\DeclareAcronym{bic}{
    short = BIC,
    long = Bayesian Information Criterion 
}
\DeclareAcronym{cks}{
    short = CKS,
    long = Compositional Kernel Search 
}
\DeclareAcronym{dl}{
    short = DL,
    long = Deep Learning 
}
\DeclareAcronym{gp}{
    short = GP,
    long = Gaussian Process,
    long-plural-form = Gaussian Processes 
}
\DeclareAcronym{hmc}{
    short = HMC, 
    long = Hamiltonian Monte Carlo
}
\DeclareAcronym{kl}{
    short = KL, 
    long = Kullback-Leibler
}
\DeclareAcronym{ks}{
    short = KS,
    long = Kernel Search 
}
\DeclareAcronym{lfm}{
    short = LFM, 
    long = Latent Force Model 
}
\DeclareAcronym{lodegp}{
    short = LODE-GP, 
    long = Linear Ordinary Differential Equation Gaussian Process,
    long-plural-form = Linear Ordinary Differential Equation Gaussian Processes
}
\DeclareAcronym{lti}{
    short = LTI, 
    long = Linear Time Invariant 
}
\DeclareAcronym{map}{
    short = MAP, 
    long = Maximum A Posteriori 
}
\DeclareAcronym{mc}{
    short = MC, 
    long = Monte Carlo 
}
\DeclareAcronym{mcmc}{
    short = MCMC, 
    long = Markov Chain Monte Carlo 
}
\DeclareAcronym{mll}{
    short = MLL, 
    long = Marginal Log Likelihood 
}
\DeclareAcronym{ml}{
    short = ML, 
    long = Machine Learning
}
\DeclareAcronym{mpc}{
    short = MPC, 
    long = Model Predictive Control 
}
\DeclareAcronym{nn}{
    short = NN, 
    long = Neural Network 
}
\DeclareAcronym{nuts}{
    short = NUTS, 
    long = No U-Turn Sampler 
}
\DeclareAcronym{ode}{
    short = ODE, 
    long = Ordinary Differential Equation 
}
\DeclareAcronym{rmse}{
    short = rmse, 
    long = Root Mean Squared Error 
}
\DeclareAcronym{se}{
    short = SE, 
    long = Squared Exponential 
}
\DeclareAcronym{sgd}{
    short = SGD, 
    long = Stochastic Gradient Descent 
}
\DeclareAcronym{skc}{
    short = SKC, 
    long = Structured Kernel Composition 
}
\DeclareAcronym{snf}{
    short = SNF, 
    long = Smith Normal Form 
}
\DeclareAcronym{rkhs}{
    short = RKHS, 
    long = Reproducing Kernel Hilbert Space 
}
\title{Physics-informed Gaussian Processes as Linear Model Predictive Controller with Constraint Satisfaction}
\author{Jörn Tebbe$^{1}$, Andreas Besginow$^{1}$, Markus Lange-Hegermann$^{1}$
\thanks{$^{1}$All authors are with Institute Industrial IT, Faculty of Electrical Engineering and Automation, OWL University of Applied Sciences and Arts, Lemgo, Germany.
Jörn Tebbe and Andreas Besginow are supported by the SAIL project which is funded by the Ministry of Culture and Science of the State of North Rhine-Westphalia under the grant no NW21-059C. Corresponding author: joern.tebbe@th-owl.de}}
\begin{document}

\maketitle

\begin{abstract}
    Model Predictive Control evolved as the state of the art paradigm for safety critical control tasks.
    Control-as-Inference approaches thereof model the constrained optimization problem as a probabilistic inference problem.
    The constraints have to be implemented into the inference model. 
    A recently introduced physics-informed Gaussian Process method uses Control-as-Inference with a Gaussian likelihood for state constraint modeling, but lacks guarantees of open-loop constraint satisfaction. 
    We mitigate the lack of guarantees via an additional sampling step using Hamiltonian Monte Carlo sampling in order to obtain safe rollouts of the open-loop dynamics which are then used to obtain an approximation of the truncated normal distribution which has full probability mass in the safe area.
    We provide formal guarantees of constraint satisfaction while maintaining the ODE structure of the Gaussian Process on a discretized grid. Moreover, we show that we are able to perform optimization of a quadratic cost function by closed form Gaussian Process computations only and introduce the Matérn kernel into the inference model.
\end{abstract}

\section{Introduction}

Controlling technical systems is a central part of modern industrial applications.
The physical limits of real world systems have to be considered in order to provide safe operation \cite{tebbe2024efficiently}.
Model Predictive Control \cite{rawlings2017model} is a compelling framework for this, which requires a model of the system which can be derived from first order principles and/or data-driven approaches.
This model, e.g.\ a \ac{gp} \cite{rasmussen2006gaussian}, projects the system under given inputs into the future and we obtain the optimal control input via a constrained optimization scheme.
The optimization is constrained on the dynamics, the initial state and input, and on input and state constraints for the considered horizon. \\
Another approach of MPC is the Control-as-Inference \cite{levine2018reinforcement} paradigm which suggests obtaining the desired control input for the next time step by solving a probabilistic inference problem instead of an optimization.
In recent work, such a scheme was introduced using \acp{gp} \cite{tebbe2025physics}.
As a Bayesian method, a \ac{gp} prior uses an inductive bias on the modeled functions and provides closed predictive distribution formulas under the assumption of Gaussian noise. \\
For the Control-as-Inference scheme, the authors used a \ac{gp} prior for \acp{ode}, which model the underlying control system.
The control input is obtained via conditioning the \ac{gp} on a desired reference. The obtained marginal predictive distributions provide each point in the output space with strict positive probability mass which makes this framework unsuitable for regression bounded by given constraints. Although the authors choose the mean of the predictive distribution as the control input, it is not guaranteed that the mean satisfies the given constraints. \\
A better choice to model bounded functions, which is required for MPC, is pushing full probability mass inside the constraints. This procedure is well known in the \ac{gp} literature for ordinary regression \cite{jensen2013bounded}.
The easiest approach is warping \cite{snelson2003warped}, due to its preservation of the Gaussian likelihood. 
Unfortunately, the nonlinear transformation breaks the covariance structure of the linear \ac{ode} kernel which means that both mean and samples no longer have to fulfill the \ac{ode}.
Therefore, we proceed to use bounded likelihood formulations which break Gaussianity, therefore require approximate solutions like \ac{mc} approximations, but retain the \ac{lodegp} prior unchanged. \\
The \ac{mc} methods use samples from the posterior and apply accept/reject sampling for the trajectories in favor of the constraints. This naive approach lacks efficiency, since the probability mass of the constrained space will degrade with growing dimension of the \ac{gp}. We therefore chose an advanced sampling strategy such as \ac{hmc} \cite{pakman2014exact} which is able to approximate small subsets of the output space. \\
As the authors in \cite{tebbe2025physics} only use \ac{se} kernel functions as part of their latent \ac{gp} prior for the parameterizing functions, the sampling for constraint satisfaction has to break the thereby induced smoothness. As a remedy, we propose to use Matérn type kernel functions with suitable smoothness parameter in order to obtain more flexible functions which are able to fulfill the constraints.
Moreover, we want to optimize cost functions, which are often of quadratic type. These cost functions provide easy to compute control laws \cite{rawlings2017model} for linear \ac{mpc} problems. In a similar manner, we provide a way of incorporating quadratic cost functions in a Control-as-Inference fashion for \acp{gp}. \\
The remainder of the paper is structured as follows. In Section \ref{sec:preliminaries} we provide our considered \ac{mpc} problem and recall Control-as-Inference via \acp{gp}. In Section \ref{sec:method} we provide our methodology for the choice of a better kernel, an additional reweighting as adaptation of the cost function and a subsequent sampling step for open-loop constraint satisfaction. In Section \ref{sec:experiments} we provide two numerical examples which validate our method, while we conclude in Section \ref{sec:conclusion}.
\section{Problem formulation and Preliminaries}
\label{sec:preliminaries}

\subsection{Problem formulation}

We want to solve the optimal control problem
\begin{subequations} \label{eq:mpc}
\begin{align} 
    \min_{u(t)} &\sum_{i=0}^{N} (x_{\text{ref}}(t_i) - x(t_i))^2 + \Vert u \Vert\label{eq:mpc:obj}\\
    \text{s.t. } \partial_t x &= Ax + Bu, \label{eq:mpc:con:ode}\\
    x(t_0) &= x_0, \label{eq:mpc:con:init}\\
    x_{\min} &\leq x(t) \leq x_{\max} \quad \forall t \in [t_0,t_N], \label{eq:mpc:con:x}\\
    u_{\min} &\leq u(t) \leq u_{\max} \quad \forall t \in [t_0,t_N] \label{eq:mpc:con:u}.
\end{align}
\end{subequations}
with $A \in \mathbb{R}^{n_x \times n_x}$ and $B \in \mathbb{R}^{n_x \times n_u}
$ as system and control matrices, respectively \cite{rawlings2017model}. The objective \eqref{eq:mpc:obj} is a quadratic cost function for a given reference with regularization of the control function. The linear \ac{ode} \eqref{eq:mpc:con:ode} and the initial state \eqref{eq:mpc:con:init} are assumed to be known in advance. For ease of notation, we assume that the state bounds \eqref{eq:mpc:con:x} and input bounds \eqref{eq:mpc:con:u} are constant. The proposed methods also work for time-varying bounds which we validate in section \ref{sec:experiments}. We assume that the control system $(A,B)$ is controllable. \\
For \ac{mpc}, this problem is solved repeatedly between two time steps $t_i$ and $t_{i+1}$, while the computed control action is applied for $t_{i+1}$ to $t_{i+2}$. 

\subsection{Gaussian Processes}
A \acf{gp} \cite{rasmussen2006gaussian} $g = \mathcal{GP}(\mu, k)$ is a stochastic process with the property that all function evaluations $g(t_1), \ldots, g(t_n)$ are jointly Gaussian.
Such a \ac{gp} is fully characterized by its mean function $\mu(t)$ and covariance function $k(t, t')$.
We can condition the \ac{gp} prior on data $\mathcal{D}=(T,Z)$ ,with $T \in \mathbb{R}^{n \times 1}$ and $Z\in \mathbb{R}^{n \times n_z}$, with a Gaussian likelihood and obtain the posterior via Bayes' rule
\begin{equation}
    p(f|T,Z,\theta) = \frac{p(Z|f,T,\theta)p(f, T, \theta)}{p(T,Z,\theta)}
\end{equation}
with hyperparameters $\theta \in \mathbb{R}^{n_\theta}$.
Since both prior and likelihood are Gaussian, the product is also Gaussian. We thus obtain again a \ac{gp} as posterior distribution which is again fully defined by its mean and covariance which are derived as
\begin{subequations} \label{eq:gp}
    \begin{align}
        \mu^* &= \mu(t^*) + K_*^\top(K + \sigma_n^2 I)^{-1}(z - \mu(t))\label{eq:gp:mean} \\
        \Sigma^* &= K_{**} - K_*^\top(K + \sigma_n^2 I)^{-1} {K_*} \label{eq:gp:cov}
    \end{align}
\end{subequations}
with covariance matrices $K = (k(t_i, t_j))_{i,j} \in \mathbb{R}^{n \times n}$, $K_* = (k(t_i, t^*_j))_{i, j} \in \mathbb{R}^{n \times n_*}$ and $K_{**} = (k(t^*_i, t^*_j))_{i, j} \in \mathbb{R}^{n_* \times n_*}$ for predictive positions $t^* \in \mathbb{R}^{n_*}$ with noise variance $\sigma_n^2$.
The noise variance does not have to be constant (homoscedastic), but can be input dependent (heteroscedastic), i.e. $\sigma_n^2(t) \in \mathbb{R}^{n_z}$. The noise variance describes the noise we expect on a given datapoint $(t_i,z_i)$.
In our case we use the time $t$ as input and concatenate control state and input as one variable, i.e. $(x,u) = z \in \mathbb{R}^{n_z}$ with $n_z = n_x+n_u$, for the output of the \ac{gp}. Therefore, we obtain a predictive distribution of size $n_z \cdot n_*$. \\  
Hyperparameters are commonly introduced via the covariance function, for example the \ac{se} 
covariance function often includes signal variance $\sigma_f^2$ and smoothness parameter $\ell^2$: 
\begin{equation}\label{eq:SE_kernel}
    k_{\text{SE}}(t, t') = \sigma_f^2\exp\left(-\frac{(t-t')^2}{2\ell^2}\right)
\end{equation}
As the function $k_\text{SE}$ is infinitely many times differentiable, so are the samples of the \ac{gp} almost surely. 
Another class of covariance functions is the Matérn family
\begin{align*}
    k_\nu(t,t') &= \sigma_f^2 \exp\left(-\frac{\sqrt{2r + 1}\vert t - t'\vert}{\ell}\right) v(x) \\
    v(x) &= \frac{r!}{(2r)!} \sum_{i=0}^r \frac{(r+i)!}{i!(r-i)!}\left(\frac{2\sqrt{2r+1}\vert t - t'\vert}{\ell} \right)^{r-i}
\end{align*}
    for $\nu = r + \frac{1}{2}$ and $\ell > 0$. The \ac{gp} prior $\mathcal{GP}(0, k_{r+\frac{1}{2}})$ is $r$ times mean squared differentiable \cite{rasmussen2006gaussian}, in contrast to its samples which are not necessarily $r$ times differentiable.
    Note, that the \ac{se} kernel is obtained as a Matérn kernel for $\nu \to \infty$ and is therefore infinitely many times differentiable.
    
\subsection{Linear Ordinary Differential Equation GPs}\label{sec:LODE_GP}
The class of \acp{gp} is closed under linear operations,
 i.e.\ applying a linear operator $\mathcal{L}$ to a \ac{gp} $g$ as $\mathcal{L}g$ is again a \ac{gp}.
This ensures that realizations of the \ac{gp} $\mathcal{L}g$ lie in the image of the linear operator $\mathcal{L}$ \cite{langehegermann2018algorithmic}.
We demonstrate the procedure for constructing so-called \acp{lodegp} --- \acp{gp} that strictly satisfy the underlying system of linear homogeneous ordinary differential equations \eqref{eq:mpc:con:ode}.
We subtract $(\partial_t x)I$ and combine state $x$ and input $u$ by stacking it in one variable $z \in \mathbb{R}^{n_z}$ to reformulate the system in \eqref{eq:mpc:con:ode} as:
\begin{align}\label{eq:harmonic_oscillator_diffeq}
	0 = H \cdot z =  \begin{pmatrix}
	    A - \partial_t x I | B
	\end{pmatrix} \cdot 
    \begin{pmatrix}
        x \\
        u
    \end{pmatrix}
\end{align}
We can algorithmically factor $H \in \mathbb{R}[\partial_t]^{n_x \times n_z}$ into three matrices such that $Q\cdot H\cdot V = D$, where $D \in \mathbb{R}[\partial_t]^{n_x \times n_z}$ is called Smith Normal Form and $Q \in \mathbb{R}[\partial_t]^{n_x \times n_x}, V \in \mathbb{R}[\partial_t]^{n_z \times n_z}$ invertible \cite{newman1997smith}.
All matrices are defined over the polynomial ring $\mathbb{R}[\partial_t]$ i.e.\ their entries are polynomials in $\partial_t$.

We then construct a prior latent \ac{gp} $\tilde{g} = \mathcal{GP}(0, \tilde{K})$ with $\tilde{K} \in \mathbb{R}^{n_z \times n_z}$ being a matrix of latent kernels, using the simple construction rules presented in Table 1 of \cite{besginow2022constraining}. The construction is based on the diagonal entries of $D$ which are $0$ or $1$ for controllable systems.
We obtain a \ac{lodegp} by applying the linear operator $V$ to this \emph{latent} \ac{gp} $\tilde{g}$ via
\begin{equation}\label{eq:LODE_GP_harmonic}
    V\tilde{g} = \mathcal{GP}\left(\mathbf{0}, V\cdot 
			\tilde{K}
    \cdot \hat{V}^\top\right)
\end{equation}
where $\hat{V}$ is the operator $V$ applied to the second argument ($t'$) of the covariance functions $k_i$ in $\tilde{K}$.
We thereby guarantee that the realizations of the resulting \ac{lodegp} $V\tilde{g}$ in Equation~\eqref{eq:LODE_GP_harmonic} \emph{strictly} satisfy the system in Equation~\eqref{eq:harmonic_oscillator_diffeq}, as detailed in \cite{besginow2022constraining}.
The \ac{lodegp} in Equation~\eqref{eq:LODE_GP_harmonic} can be trained and conditioned on datapoints as any regular \ac{gp}.

\subsection{Control as Inference with \acp{lodegp}} \label{sec:CaI}

Recall the optimal control problem \eqref{eq:mpc}. This problem is solved in \cite{tebbe2025physics} via a Control-as-Inference scheme using a \ac{lodegp}.
The minimization of \eqref{eq:mpc:obj} is obtained via the correspondence of optimality in \acp{rkhs} and the GP posterior mean \eqref{eq:gp:mean} \cite{tebbe2025physics}.
The ODE constraint \eqref{eq:mpc:con:ode} is fulfilled exactly via the \ac{lodegp} structure.
For the initial condition constraint \eqref{eq:mpc:con:init}, we use the initial point $(t_0, z_0)=(t_0, (x_0,u_0))$ as datapoint in the conditioned dataset $\mathcal{D}$ with a heteroscedastic noise of the numerical jitter $\sigma_n^2(t_0)=10^{-8}$.
The two constraints \eqref{eq:mpc:con:ode} and \eqref{eq:mpc:con:init} require no additional tuning. \\
The critical constraints are the state and input constraints \eqref{eq:mpc:con:x} and \eqref{eq:mpc:con:u} which are in \cite{tebbe2025physics} modeled as datapoints $(t_i,z_i)$ with $z_i = \frac{1}{2}(z_\text{max} + z_\text{min})$ and heteroscedastic noise variance $\sigma_n^2(t_i) = \frac{1}{2}(z_\text{max} - z_\text{min})$.
This retains the property to compute the \ac{gp} posterior in closed form at the price of no guarantees to fulfill the constraints. \\
Moreover, the hypothesis space for the optimization \eqref{eq:mpc:obj} is determined by the \ac{se} kernel used in \cite{tebbe2025physics}. In the following, we provide approaches to improve the Control-as-Inference scheme by different latent kernel functions and additionally steering of the optimization function towards a quadratic cost function.
\section{Methodology}
\label{sec:method}
\subsection{Matérn covariance functions for LODE-GPs}
\label{sec:matern}
The choice of the latent covariance function of the \ac{lodegp} determines the degree of regularization we provide for the optimal control problem.
The benefit of using Matérn kernels is that the control function does not have to be smooth but can exhibit more control advantage behavior.
We provide a characterization of the function space we can model with \acp{gp} with Matérn kernels.
\begin{theorem} \label{thm:matern}
    The RKHS of a Matérn \ac{gp} $\mathcal{H}_{k_\nu}$ on $X \subset \mathbb{R}$
    , $\nu > m \geq 0$ with $\nu \in \mathbb{N}_0 + \tfrac12$ is dense in $C^m$.
\end{theorem}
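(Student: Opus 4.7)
The plan is to identify $\mathcal{H}_{k_\nu}$ with a Sobolev space and then combine a Sobolev embedding with density of smooth functions in $C^m$. For $\nu = r + \tfrac12$ on $\mathbb{R}$, the Fourier transform of $k_\nu$ is proportional to $(1+\omega^2)^{-(\nu+1/2)}$, so its native RKHS coincides, with equivalent norms, with the Sobolev space $H^{\nu+1/2}(\mathbb{R}) = H^{r+1}(\mathbb{R})$; this is a classical reproducing-kernel computation (e.g.\ Wendland, \emph{Scattered Data Approximation}, Thm.~10.12). For a bounded $X \subset \mathbb{R}$, invoking a standard extension theorem transfers this to $\mathcal{H}_{k_\nu}(X) \simeq H^{r+1}(X)$.

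Granted this identification, the one-dimensional Sobolev embedding $H^s(X) \hookrightarrow C^m(X)$, valid whenever $s > m + \tfrac12$, yields a continuous inclusion $\mathcal{H}_{k_\nu} \hookrightarrow C^m(X)$ under the hypothesis $\nu > m$: for $\nu \in \mathbb{N}_0 + \tfrac12$ and $m \in \mathbb{N}_0$, the inequality $\nu > m$ is equivalent to $r+1 > m+\tfrac12$. To upgrade inclusion to density, I would next show $C^\infty(X) \subset \mathcal{H}_{k_\nu}$: on a bounded $X$, every $C^\infty$ function admits a compactly supported smooth extension to $\mathbb{R}$ via a cutoff, and such extensions lie in $H^s(\mathbb{R})$ for every $s$. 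Since polynomials, hence $C^\infty(X)$, are dense in $C^m(X)$ in the $C^m$ norm (approximate $f^{(m)}$ by Weierstrass and antidifferentiate $m$ times, correcting boundary constants), one concludes $\mathcal{H}_{k_\nu}$ is dense in $C^m(X)$. For unbounded $X$ the same argument applies locally in the compact-open $C^m$ topology by exhausting $X$ by compact subsets.

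The main obstacle is Step 1: establishing $\mathcal{H}_{k_\nu} \simeq H^{r+1}$ with equivalent norms requires a Fourier-analytic identification of the reproducing kernel together with a Stein-type extension/restriction argument to transfer from $\mathbb{R}$ to $X$. A more direct but less elementary alternative would be to cite the $C^m$-universality results for Matérn kernels proved in the universal-kernel literature (Micchelli et al., Steinwart--Christmann); the Sobolev route above is, however, self-contained and produces the sharp threshold $\nu > m$ as a transparent consequence of the Sobolev embedding.
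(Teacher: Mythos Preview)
Your proposal is correct and follows essentially the same route as the paper's own (sketch) proof: identify $\mathcal{H}_{k_\nu}$ with the Sobolev space $H^{\nu+1/2}=H^{r+1}$, invoke the Sobolev embedding into $C^m$, and conclude density via standard approximation by smooth functions. You supply considerably more detail than the paper (the Fourier computation, the extension/restriction to $X$, the Weierstrass step), but the underlying strategy is identical.
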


\emph{Proof (Sketch):} The Matérn \ac{rkhs} $\mathcal{H}_{k_\nu}$ is equivalent to the Sobolev space $W^{s,2}$ and is, due to the Sobolev embedding theorem and standard density arguments, dense in $C^m$. \hfill $\blacksquare$ \\
Using the above property, we now show that \acp{lodegp} with a latent Matérn kernel are dense in the solution space of the underlying differential equation. For this purpose, we denote the relevant columns of $V$, i.e.\ the columns belonging to zero diagonal entries in $D$, by $P \in \mathbb{R}[\partial]^{q \times p}$ as a parametrization of the solution space.

\begin{theorem} \label{thm:lodegp}
For the parameterization $P \in \mathbb{R}[\partial]^{q \times p}$ as above, and Matérn kernel $k_{\nu}$ with $\nu \geq m + h + \tfrac12$ it holds that $P\cdot(\mathcal{H}_{k_{\nu}})^p$ is dense in $\text{sol}_{C^m}(H)$.
\end{theorem}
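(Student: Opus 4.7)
The plan is to reduce the density claim for $P\cdot(\mathcal{H}_{k_\nu})^p$ inside $\mathrm{sol}_{C^m}(H)$ to the density of the Matérn \ac{rkhs} in a sufficiently smooth space $C^{m+h}$, and then transport that density through the parameterization $P$. The crucial observation is that $P$ is a matrix of polynomial differential operators of order at most $h$ and therefore continuous from $C^{m+h}$ to $C^m$, so it preserves convergence at the cost of $h$ derivatives.

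First, I would pick an arbitrary $y \in \mathrm{sol}_{C^m}(H)$ and appeal to the fact that $P$, constructed from the columns of $V$ corresponding to the zero diagonal entries of the Smith Normal Form under the controllability assumption, parameterizes the entire solution module of $H$. From this I extract a lift $\phi \in (C^{m+h})^p$ with $P\phi = y$. Second, Theorem~\ref{thm:matern} applied with target regularity $m+h$ gives, using that $\nu \geq m+h+\tfrac12$ and $\nu \in \mathbb{N}_0 + \tfrac12$ forces $\nu > m+h$, that $\mathcal{H}_{k_\nu}$ is dense in $C^{m+h}$. So there exists a sequence $\phi_n \in (\mathcal{H}_{k_\nu})^p$ with $\phi_n \to \phi$ componentwise in $C^{m+h}$. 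Third, continuity of $P:C^{m+h}\to C^m$ yields $P\phi_n \to P\phi = y$ in $C^m$, and each $P\phi_n$ lies in $P\cdot(\mathcal{H}_{k_\nu})^p \subseteq \mathrm{sol}_{C^m}(H)$ by construction of $P$, which is exactly the claimed density.

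The step I expect to be the main obstacle is producing the lift $\phi$ with the prescribed regularity $C^{m+h}$ from a target $y$ that is only of class $C^m$. The algebraic theory of parameterizations (via the Smith Normal Form) guarantees surjectivity of $P$ at the formal or smooth level, but an analytic right inverse respecting finite differentiability classes is not automatic and in general would lose $h$ derivatives on the parameter side. I would circumvent this by first showing that smooth solutions are dense in $\mathrm{sol}_{C^m}(H)$: convolution with a standard mollifier commutes with the constant-coefficient operator $H$ and therefore preserves the solution property. Applying the parameterization to such smoothed targets gives $\phi \in (C^\infty)^p \subset (C^{m+h})^p$, and composing the mollification with the Matérn approximation above via a diagonal extraction yields the required density.

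Finally, I would note that the bound $\nu \geq m + h + \tfrac12$ is tight in this chain of inequalities: $h$ compensates for the derivatives consumed by $P$ and $m$ accounts for the regularity demanded of the output, so any weaker smoothness of the Matérn kernel would break the continuity step $P:C^{m+h}\to C^m$ that drives the transfer of density.
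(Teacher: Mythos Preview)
Your proposal is correct and matches the paper's own approach, which merely sketches the argument by pointing to \cite[Proposition~3.5]{langehegermann2021linearly} for the parameterization step and to the density arguments behind Theorem~\ref{thm:matern} for the approximation step. Your mollification workaround for the regularity of the lift is exactly the kind of standard density argument the paper alludes to, and your identification of $h$ as the derivative budget consumed by $P$ explains the otherwise undefined constant in the hypothesis $\nu \geq m+h+\tfrac12$.
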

The proof for Theorem \ref{thm:lodegp} uses a similar argument as \cite[Proposition 3.5]{langehegermann2021linearly} and standard density arguments as those in Theorem \ref{thm:matern}.

Therefore, the mean of the \ac{lodegp} with latent Matérn kernel is able to approximate arbitrary solutions of given differentiability.
In particular, this result shows that all possible \emph{mean functions} are dense in the solution space.
This theorem allows us to use Matérn kernels in the \ac{lodegp} for our Control-as-Inference scheme.

\subsection{Cost Function optimization}
\label{sec:opt}

In \ac{mpc} we optimize arbitrary cost functions, but most of them are of quadratic type
\begin{subequations}
    \label{eq:mpc:cost}
\begin{align} 
    J(x,u) &= \sum_{i=1}^p (x - x_{\text{ref}})^TE(x - x_{\text{ref}}) \\
     &\qquad+ (u - u_{\text{ref}})^TF(u - u_{\text{ref}}) \\
           &= \sum_{i=1}^p (z - z_{\text{ref}})^T\begin{pmatrix}
        E & 0 \\
        0 & F
    \end{pmatrix}(z - z_{\text{ref}}) \\
    &= \sum_{i=1}^p (z - z_{\text{ref}})^TG(z - z_{\text{ref}}) = J(z) \label{eq:cost:G}
\end{align}
\end{subequations}
with $E,F,G$ being positive definite matrices of respective sizes.
The authors in \cite{tebbe2025physics} only provide optimality in the RKHS norm, given by the LODE-Kernel, which is hard to interpret. Therefore, we reweigh the \ac{lodegp} posterior $\mathcal{GP}(\mu^*, \Sigma^*)$ with the normal distribution $\mathcal{N}(z_\text{ref}, S)$ with $S$ being proportional to $G^{-1}$, therefore a positive definite matrix. The matrix $S$ is thereby acting in the same manner as the inverse of $G$ in \eqref{eq:mpc:cost}. We see this, as the log-likelihood of $\mathcal{N}(z_\text{ref}, S)$ is proportional to \eqref{eq:cost:G} with $S= G^{-1}$. \\
The reweighting is done via multiplication of the GP's predictive posterior $\mathcal{N}(\mu^*,\Sigma^*)$ with the multivariate normal distribution $\mathcal{N}(z_\text{ref}, S)$. The product is proportional to the normal distribution $\mathcal{N}(\mu_\text{opt}, \Sigma_\text{opt})$ with 
\begin{subequations}
\label{eq:cost}
\begin{align} 
    \mu_\text{opt} =& \Sigma_\text{opt} ({\Sigma^*}^{-1}\mu + S^{-1}z_\text{ref}) \label{eq:opt:mean} \\
    \Sigma_\text{opt} =& ({\Sigma^*}^{-1} + S^{-1})^{-1}, \label{eq:opt:cov}
\end{align}
\end{subequations}
due to \cite[Appendix A.2]{rasmussen2006gaussian}.
Thereby we obtain a normal distribution which still satisfies the linear ODE structure on the discretized grid. \\
This reweighting is possible for arbitrary cost functions, which are given via (log-)likelihoods, losing the property that the product of both distributions yields a Gaussian. \\ We may now sample from the normal distribution $\mathcal{N}(\mu_\text{opt}, \Sigma_\text{opt})$ for the sake of constraint satisfaction.

\subsection{Constraint satisfaction via sampling}
\label{sec:constraints}
A natural way of achieving constraint satisfaction, is combining the \ac{lodegp} posterior with the uniform likelihood 
\begin{equation} \label{eq:uniform}
    \mathcal{U}(z, z_{\min}, z_{\max}) = \begin{cases}
        \frac{1}{z_{\max} - z_{\min}}  &\text{ if $z \in [z_{\min}, z_{\max}]$} \\
        \centering 0  &\text { else}
    \end{cases}
\end{equation} on the constraints $[z_{\min}, z_{\max}]$. We obtain
\begin{equation} \label{eq:truncated}
    \mathcal{TGP} (z \vert t^*) = \frac{V\tilde{g}(t^*) \cdot \mathcal{U}(z, z_{\min}, z_{\max})}{\int_{z_{\min}}^{z_{\max}} V\tilde{g}(t^* \vert f) \mathrm{d}f}
\end{equation}
via Bayes' rule. This distribution is the well known truncated normal distribution \cite{bay2016generalization}.
Since calculating the denominator in \eqref{eq:truncated} is analytically intractable, we approximate this distribution via \ac{mc} sampling.
A naive way is i.i.d.\ sampling from the \ac{lodegp} posterior and rejecting samples which are not in the feasible interval given by the uniform distribution \eqref{eq:uniform}.
As i.i.d.\ sampling provides vanishing acceptance rates with growing dimension of the multivariate normal distribution, we require more efficient sampling strategies. \\
A sampling strategy which, by construction, provides acceptance rate $1$ is \ac{hmc} \cite{pakman2014exact}.
This sampling strategy leverages the sample distribution to a Hamiltonian dynamical system which is then simulated on paths which have a constant value of the Hamiltonian.
Exploiting the invariance structure of Hamiltonian systems and the normal distribution properties, the path of the Hamiltonian is simulated only inside the constraints resulting in proposed samples which satisfy the given linear constraints, given that the initial sample has support inside the constraints. 
Exact \ac{hmc} uses the fact that, for truncated Gaussians, the simulated Hamiltonian path can be integrated exactly with particle collisions at the constraint boundaries \cite{pakman2014exact}. The algorithm produces a Markov-Chain which is, up to discretization error, a sample of a \ac{lodegp} if the chain is initiated with a sample feasible to the constraints.
We choose this via the truncated mean
\begin{equation} \label{eq:mean:feasible}
    \bar{\mu}(t) = \begin{cases}
        z_{\min} \quad \text{if } \mu_\text{opt}(t) < z_{\min} \\
        z_{\max} \quad \text{if } \mu_\text{opt}(t) > z_{\max} \\
        \mu_\text{opt} \quad \text{ else}
    \end{cases}
\end{equation}
which does not necessarily satisfy the LODE structure. This choice is motivated by the result of \cite{bay2016generalization} which states that the optimal function based on its \ac{rkhs} for a truncated normal distribution is the posterior mode, which in our case is $\bar\mu$.
Thereby we provide an initial sample which is close to the \ac{lodegp} distribution and the optimal choice for a truncated normal distribution.
In similar contexts, \ac{hmc} is often referred to be one of the most efficient sampling methods for truncated multivariate normal distributions \cite{lopez2018finite, ding2022multivariate}.
We now show that, as the \ac{lodegp} is again a \ac{gp} which produces normal distributions, the truncation via uniform likelihoods keeps the linear \ac{ode} structure.
\begin{theorem}
    The support of the distribution $\mathcal{TGP}$ \eqref{eq:truncated} fulfills the ODE structure given by \eqref{eq:mpc:con:ode} and the constraints given by \eqref{eq:mpc:con:x} and \eqref{eq:mpc:con:u}.
\end{theorem}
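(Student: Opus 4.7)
The plan is to exploit that, by Bayes' rule, the truncated density in \eqref{eq:truncated} is (up to normalization) a pointwise product of the \ac{lodegp} density and the uniform likelihood, so the support of $\mathcal{TGP}$ equals the intersection of the two individual supports. The proof then reduces to identifying each of these two supports and noting that the intersection enforces both the ODE and the box constraints simultaneously.

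First, I would invoke the construction of the \ac{lodegp} recalled in Section~\ref{sec:LODE_GP}: starting from the \ac{snf} factorization $QHV = D$ and applying the linear operator $V$ to the latent \ac{gp} $\tilde g$, every realization of $V\tilde g$ lies in the image of $V$, which by construction is contained in $\mathrm{sol}(H) = \{z : Hz = 0\}$ (cf.\ \cite{besginow2022constraining,langehegermann2018algorithmic}). Hence for any collection of evaluation times $t^*$, the support of $V\tilde g(t^*)$ is contained in the solution set of the ODE \eqref{eq:mpc:con:ode}. Second, the uniform likelihood \eqref{eq:uniform} is, by definition, zero outside the box $[z_{\min}, z_{\max}]$, so its support is exactly this box, covering the state and input constraints \eqref{eq:mpc:con:x} and \eqref{eq:mpc:con:u}.

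Combining these two observations via \eqref{eq:truncated}: the numerator vanishes whenever either factor vanishes, and the denominator is a positive normalization constant (strictly positive because the \ac{lodegp} assigns positive probability mass to the feasible set, which we verify for instance by noting that the truncated mean $\bar\mu$ from \eqref{eq:mean:feasible} provides a feasible point in a neighborhood of which the Gaussian density is bounded away from zero). Therefore, any $z$ in the support of $\mathcal{TGP}(\cdot \vert t^*)$ simultaneously satisfies $Hz = 0$ and $z_{\min} \leq z \leq z_{\max}$, which are precisely \eqref{eq:mpc:con:ode}, \eqref{eq:mpc:con:x} and \eqref{eq:mpc:con:u}.

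The main obstacle I anticipate is a purely conceptual one rather than a technical difficulty: the ODE constraint is enforced in continuous time by the prior structure of $V\tilde g$ (it holds for all $t$, not only on the grid), whereas the uniform likelihood is only applied at the discretized evaluation points $t^*$. This mismatch should be addressed by explicitly stating that the proof guarantees continuous-time satisfaction of the ODE together with box constraint satisfaction on the chosen grid $t^*$, which is precisely the ``discretized grid'' statement mentioned in Section~\ref{sec:opt}. No other step of the argument requires more than the two supports above and the standard product-of-densities rule.
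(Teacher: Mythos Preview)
Your proposal is correct and follows essentially the same route as the paper's proof: invoke Bayes' rule on \eqref{eq:truncated}, observe that the \ac{lodegp} factor has support contained in the ODE solution set and the uniform factor has support exactly the box $[z_{\min},z_{\max}]$, and conclude that the product's support is the intersection. The paper states this in two sentences, whereas you additionally spell out the positivity of the normalizing constant and the continuous-versus-discrete subtlety (both worthwhile remarks, though note that $\bar\mu$ is feasible for the box but not for the ODE, so a cleaner witness for positive mass would be any genuine \ac{lodegp} sample that happens to lie in the box).
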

\begin{proof}
    This theorem follows directly from Bayes' rule in \eqref{eq:truncated}. Since $V\tilde{g}$ has full support on solutions of the linear ODE, so does the posterior which is restricted on the constraints by the uniform likelihood.
\end{proof}
We choose \ac{hmc} as a sampler on a discretization of $\mathcal{TGP}(z\vert t)$, because it has an acceptance rate of $1$.
We provide our proposed methodology in Algorithm \ref{alg:GP_MPC}.

\begin{algorithm}[ht]
    \caption{\ac{lodegp} based \ac{mpc} with quadratic cost function optimization and constraint satisfaction.}%
    \KwIn{Initial state $x_{t_0}$ and control $u_{t_0}$, reference $x_\text{ref}$, constraints $z_{\min}, z_{\max}$, 
    weight matrix $S$, number of \ac{hmc} samples $n_{\text{samples}}$, discretization grid $t^*$}%
    \KwOut{Simulation/Control path $\{(x_{t}, u_{t}) | t \in [t_0, t_N]\}$ satisfying constraints \eqref{eq:mpc:con:ode} -- \eqref{eq:mpc:con:u}}
    Initialize \ac{lodegp} $V\tilde{g} \sim \mathcal{GP}(0,k_\nu)$ \\
    \For {$t_i=t_0, \dots, t_N$}{
        measure current state $x_{t_i}$ \\
        generate posterior $\mathcal{N}(\mu^*, \Sigma^*) = V\tilde{g}(t^* | \mathcal{D})$ \\
         Reweigh $\mathcal{N}(\mu^*, \Sigma^*)$ with $\mathcal{N}(z_\text{ref},S)$ via \eqref{eq:cost} to $\mathcal{N}(\mu_\text{opt}, \Sigma_\text{opt})$ \\
         Truncate $\mu_\text{opt}$ to constraints via \eqref{eq:mean:feasible} to $\mu_0$ \\
        \For {$j=1,\dots,n_{\text{samples}}$}
            {Use HMC to sample from $\mu_j$ from $\mu_{j-1}$}
        set control input $u_{t_i}$ as mean of samples
        }  
    \KwRet{$\{x_{t}, u_{t} | t \in [t_0, t_N]\}$}
    \label{alg:GP_MPC}
\end{algorithm}
\section{Experiments}
\label{sec:experiments}

\begin{figure*}[t]
    \centering
     \scalebox{0.75}{\input{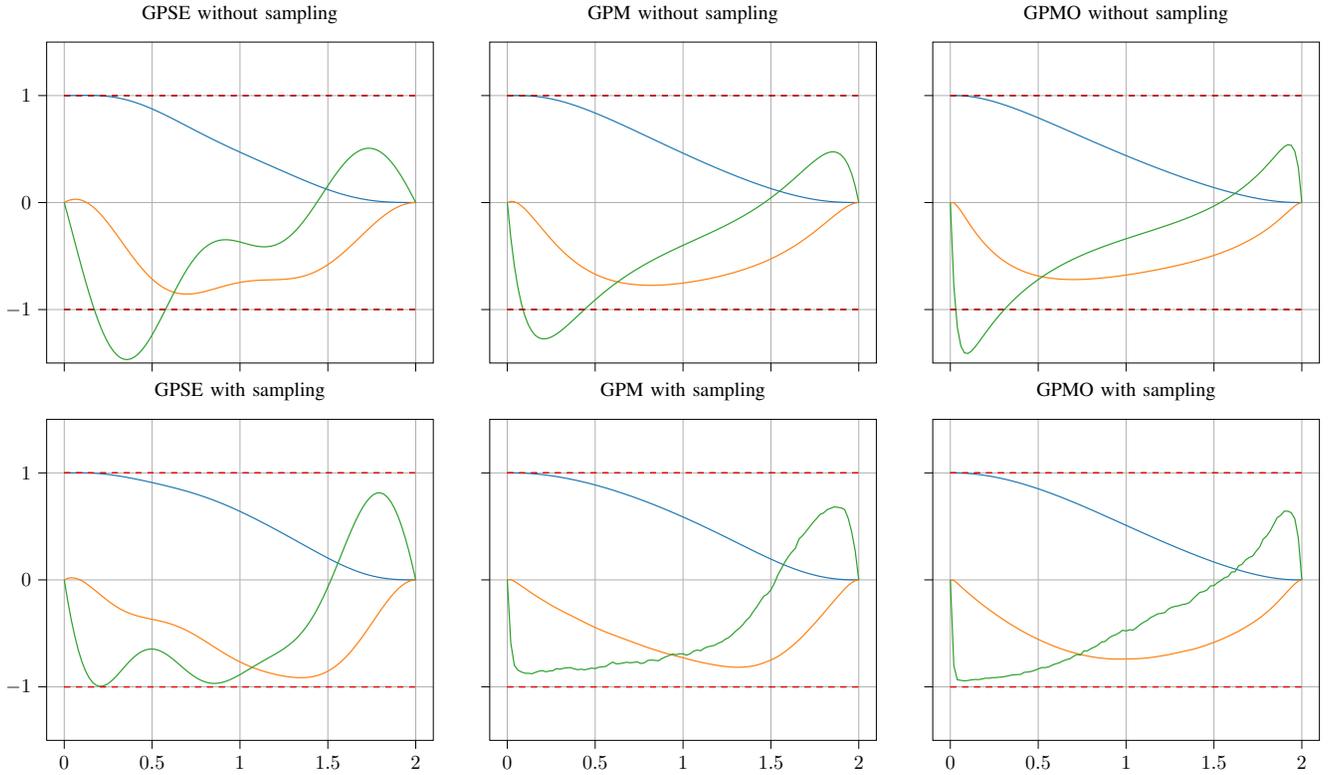}}
    \caption{Comparison of the outputs for the example given in Section \ref{sec:exp:spring}. The state $x_1$ is given in \textcolor{steelblue31119180}{blue}, the state $x_2$ in \textcolor{darkorange25512714}{orange} and the control $u$ in \textcolor{forestgreen4416044}{green}. In the first row, we see GPSE (left), GPM (middle), GPMO (right), in the second row we see the samples generated via HMC for the respective constrained LODE-GP.
    The constraints are given via the dashed lines at $z_{\min}=-1$ and $z_{\max} = 1$.
    We see that none of the three approaches satisfies the constraints without sampling.
    While GPSE provides smooth paths with small oscillations, the GPM and GPMO paths are rougher and have no oscillations.
    For the sampled versions, GPSE still suffers from oscillations via its smoothness requirements.
    The GPM and GPMO paths are rougher and obtain more control advantage behavior.}
    \label{fig:example1}
\end{figure*}

In this section we present the results of our proposed methodology on two systems.
We investigate the mean constraint violation
\begin{equation}\label{eq:constr_viol}
    \frac{1}{T}\sum_{i=1}^T  \max\{z(t_i) - z_{\max}, 0\} + \max\{z_{\min} - z(t_i), 0\}
\end{equation}
in order to show whether our approach can handle the imposed constraints.
Moreover, we investigate the mean control error, defined as
\begin{equation} \label{eq:control_error}
    \frac{1}{T}\sum_{i=1}^T (x(t_i) - x_{\text{ref}}(t_i))^2
\end{equation}
to demonstrate the control performance of our approach. \\
We compare a \ac{lodegp} model equipped with an \ac{se} kernel (GPSE) with a \ac{lodegp} model, equipped with a Matérn kernel, (GPM) and a \ac{lodegp} model, equipped with a Matérn kernel with additional cost optimization, (GPMO) as in Algorithm \ref{alg:GP_MPC}. 
For each model we provide a comparison of using the additional \ac{hmc} sampling step or not.

\subsection{Spring-mass example} \label{sec:exp:spring}
We start with a control task similar to the experiment in \cite{tebbe2025physics}.
In the first experiment, we use a spring mass damper system with the following unstable system, 
\begin{align}
	\dot{\begin{pmatrix}
	    x_1 \\
        x_2
	\end{pmatrix}} = \begin{pmatrix}
	0 & 1 \\
	1 & -1 
	\end{pmatrix} \begin{pmatrix}
	    x_1 \\
        x_2
	\end{pmatrix} + \begin{pmatrix}
	0 \\ \frac{5}{2}
	\end{pmatrix}
	u
\end{align}
with two integrators of the control function.
The control task is the regulation of all channels to $0$.
Therefore, we choose $z_\text{ref} = 0$ and $S$ as a diagonal matrix with $10^{-3}$ on the diagonal. The constraints are given by $z_{\min}=-1$ and $z_{\max}=1$.
As the authors in \cite{tebbe2025physics} used an SE kernel, the generated functions are smooth.
With the use of Matérn kernels, it is possible to control the smoothness of each individual kernel.
In our example of a spring-mass damper system, we have to use a kernel with differentiability $r \ge 2$, which models the state $x_1$. The function for $x_1$ is therefore at least twice differentiable, while the state $x_2$ is at least one time continuously differentiable and the control is only continuous. 
For all \acp{lodegp} we use hyperparameter optimization until convergence on the given data, in advance. We use a discretization of $101$ datapoints resulting in an equidistant grid with $\Delta t=0.02$. For the \ac{hmc} sampler, we use one chain with $200$ samples without a burn-in phase. 
The sampling, using a Python implementation of \cite{pakman2014exact}, took less than $1$ second for each model. \\
We see the results in Figure \ref{fig:example1} and Table \ref{tab:experiment1}.
The use of Matérn kernel in comparison to the \ac{se} kernel provides better performance, while satisfying the constraints with the use of the sampling step.
The additional reweighting via the quadratic cost function provides further performance gain while still adhering to the constraints after the sampling step. 

\begin{table}[h]
    \centering
     \caption{Results for Experiment 1.
     }
    \begin{tabular}{|c|c|c|c|}
        \hline
         Models without sampling & GPSE & GPM & GPMO  \\
         \hline
    Constraint error \eqref{eq:constr_viol} & 0.0203 & \textbf{0.01} & {0.0109} \\
    \hline
    Control error \eqref{eq:control_error}& 0.4024 & 0.3642 & \textbf{0.3367} \\
    \hline
        \hline
         Models with sampling & GPSES & GPMS & GPMOS  \\
         \hline
    Constraint error \eqref{eq:constr_viol} & \textbf{0.0} & \textbf{0.0} & \textbf{0.0} \\
    \hline
    Control error \eqref{eq:control_error}& 0.4294 & 0.3857 & \textbf{0.3452} \\
    \hline
    \end{tabular}
   
    \label{tab:experiment1}
\end{table}

\subsection{2D Integrator} \label{sec:exp:int}

\begin{figure*}
    \centering
    \scalebox{0.75}{\input{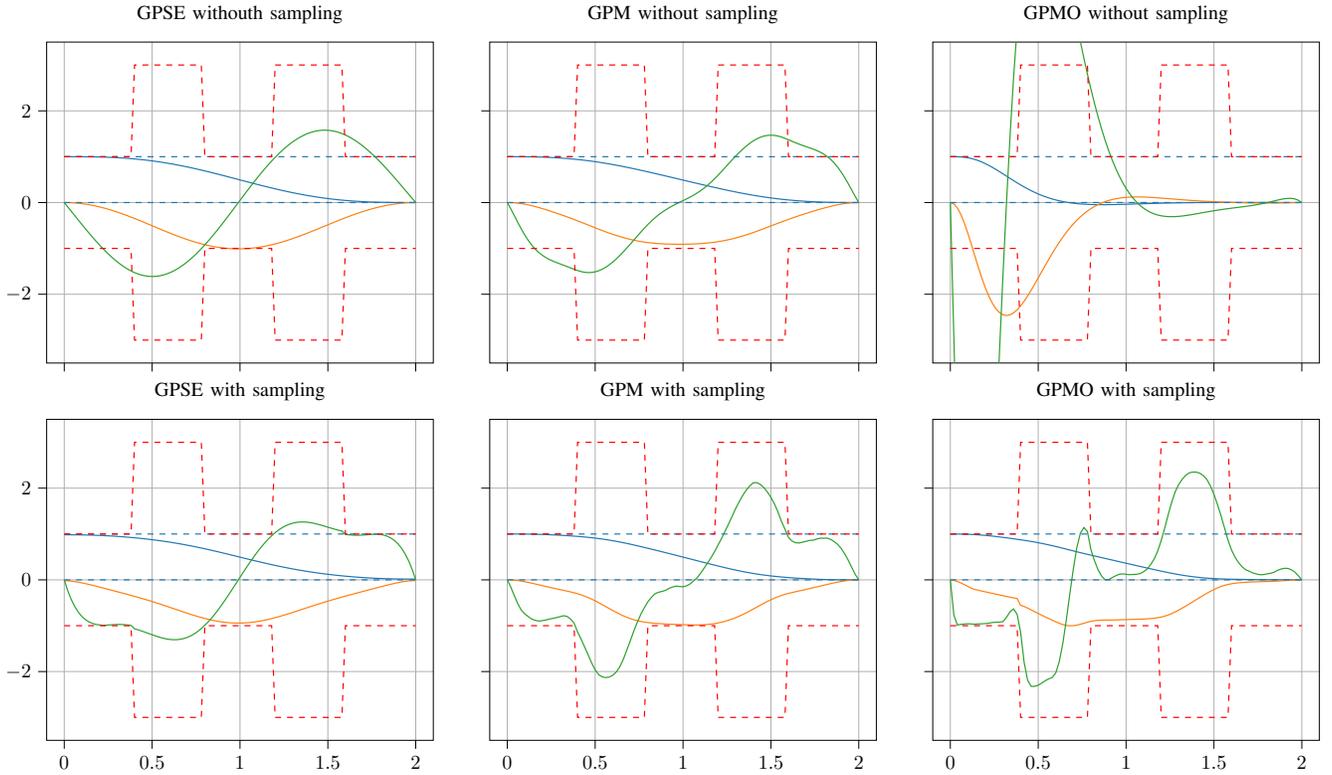}}
    \caption{Comparison of the outputs for the integrator example in Section \ref{sec:exp:int}. The state $x_1$ is given in \textcolor{steelblue31119180}{blue}, the state $x_2$ in \textcolor{darkorange25512714}{orange} and the control $u$ in \textcolor{forestgreen4416044}{green}. The constraint of $x_1$ is given via the dashed line in \textcolor{steelblue31119180}{blue} by $x_{\max} = 1$ and $x_{\min}=0$. The constraints of $x_2$ and $u$ are given via the dashed line in \textcolor{red}{red}. We observe a significantly better adapation to the constraints of GPM in comparison to GPSE. The smoothness of the \ac{se} kernel prevents the control function from aligning the constraint boundaries more aggressively. All models using sampling satisfy the constraints.
    }
    \label{fig:example2}
\end{figure*}
We use a simple integrator with varying constraints in order to check the adaptability of our control approach.
We use the system derived via
\begin{align}
	\dot{\begin{pmatrix}
	    x_1 \\
        x_2
	\end{pmatrix}} = \begin{pmatrix}
	0 & 1 \\
	0 & 0 
	\end{pmatrix} \begin{pmatrix}
	    x_1 \\
        x_2
	\end{pmatrix} + \begin{pmatrix}
	0 \\ 1
	\end{pmatrix}
	u
\end{align}
with more complex bounds which are shown in Figure \ref{fig:example2}. The control objective is to steer the state $x_1$, which is pictured in blue in Figure \ref{fig:example2}, from its initial value of $1$ to $0$ by time $t=2$. We choose $S$ as a diagonal matrix with $10^{-3}$ for state $x_1$ and $1$ for the rest of the diagonal entries. The remaining experimental settings are equal to the previous experiment. While the dynamics are simple, the constraints have a more complex structure than in the previous example. \\
We see in Figure \ref{fig:example2}, that the problem is hard to solve for the \ac{se} kernel based \ac{lodegp} GPSE, since this problem requires a lot of steering of the control function. This is better solved by a less differentiable function generated by the Matérn kernel based \ac{lodegp}.
While the \ac{hmc} sampling for the Matérn kernels takes, again, less than a second, the \ac{hmc} sampling for the \ac{se} kernel takes several minutes since the problem is nearly impossible to solve while satisfying the \ac{se} kernel's smoothness requirements.
The adaptation via a quadratic objective function yields a solution which violates the constraints, while the sampling procedure restores a feasible solution which performs better than both GPSE and GPM with sampling.
Again, the optimized approach provides the best control behavior at the cost of more constraint violation.
The sampling step provides simulation paths without constraint violation while losing only marginal control performance, see Table \ref{tab:experiment2} for results. \\
Both examples validate, that our method consisting of three unique steps performs well in the optimal control problems.

\begin{table}[ht]
    \centering
     \caption{Results for Experiment 2.
     }
    \begin{tabular}{|c|c|c|c|}
        \hline
         Models without sampling & GPSE & GPM & GPMO  \\
         \hline
    Constraint error \eqref{eq:constr_viol} & 0.0164 & 0.0190 & 0.5940 \\
    \hline
    Control error \eqref{eq:control_error}& 0.3837 & 0.3746 & \textbf{0.1314} \\
    \hline
        \hline
         Models with sampling & GPSES & GPMS & GPMOS  \\
         \hline
    Constraint error \eqref{eq:constr_viol} & \textbf{0.0} & \textbf{0.0} & \textbf{0.0} \\
    \hline
    Control error \eqref{eq:control_error}& 0.3677 & 0.3840 & \textbf{0.3373} \\
    \hline
    \end{tabular}
   
    \label{tab:experiment2}
\end{table}
\section{Conclusion}
\label{sec:conclusion}
We provided a Control-as-Inference method with constraint satisfaction guarantees for feasible \ac{mpc} problems. 
We do this by restricting the inference model, a linear ODE based Gaussian Process to input and state constraints via truncating the predictive multivariate Normal distribution. We prove that samples of the resulting distribution still satisfy the ODE. Additionally, we have shown that the Matérn kernel is a reasonable choice for the Control-as-Inference scheme, both from a theoretic view by proving a density theorem and from a practitioner's view in our experiments. Moreover, we provided a method to incorporate quadratic cost functions into the control scheme.
\bibliographystyle{plain}
\bibliography{literature}
\end{document}